\renewcommand{\epsilon}{\varepsilon}
\newcommand{\R}{\mathbb{R}}
\newcommand{\Z}{\mathbb{Z}}
\renewcommand{\H}{\mathbb{H}}
\renewcommand{\S}{\mathbb{S}}
\newcommand{\T}{\mathbb{T}}
\newcommand{\eps}{\varepsilon}
\newcommand{\der}[2]{\dfrac{\partial #1}{\partial #2}}
\newcommand{\Ome}{\Omega}
\newtheorem{thm}{Theorem}
\renewcommand{\phi}{\varphi}
\newtheorem*{thm*}{Theorem}
\newtheorem{claim}{Claim}
\newtheorem*{claim*}{Claim}
\theoremstyle{remark}
\newtheorem{remarq}{Remark}
\newtheorem*{rem*}{Remark}
\newcounter{remark}
\newcounter{case}
\newcounter{construction}
\newcounter{fact}
\newcounter{step}
\title{On minimal spheres of area $4\pi$ and rigidity}
\author{Laurent Mazet, Harold Rosenberg \thanks{The authors were partially
supported by the ANR-11-IS01-0002 grant.}}
\date{}
\begin{document}
\maketitle
\begin{abstract}
Let $M$ be a complete Riemannian $3$-manifold with sectional curvatures between
$0$ and $1$. A minimal $2$-sphere immersed in $M$ has area at least $4\pi$. If
an embedded minimal sphere has area $4\pi$, then $M$ is isometric to the unit
$3$-sphere or to a quotient of the product of the unit $2$-sphere with $\R$, with the product
metric. We also obtain a rigidity theorem for the existence of hyperbolic cusps.
Let $M$ be a complete Riemannian $3$-manifold with sectional curvatures bounded
above by $-1$. Suppose there is a $2$-torus $T$ embedded in $M$ with mean
curvature one. Then the mean convex component of $M$ bounded by $T$ is a
hyperbolic cusp;,\textit{i.e.}, it is isometric to $T \times \R$ with the
constant curvature $-1$ metric: $e^{-2t}d\sigma_0^2+dt^2$ with $d\sigma_0^2$ a
flat metric on $T$. 
\end{abstract}

\noindent \textit{Keywords: area of minimal sphere, rigidity of $3$-manifolds,
hyperbolic cusp.}

\section{Introduction}
Consider a smooth ($C^\infty$) complete metric on the $2$-sphere $S$ whose
curvature is between $0$ and $1$. It is well known that a simple closed geodesic
in $S$ has length at least $2\pi$ (see \cite{Pog2} or Klingenberg's theorem in
higher dimension \cite{Kli,ChEb}). It is less well known that when such an $S$
has a simple closed geodesic of length exactly $2\pi$, then $S$ is isometric to
the unit $2$-sphere $\S^2_1$. This result is proved in \cite{AnHo}, and the
authors attribute the theorem to E.~Calabi.

With this in mind, we consider what happens in a complete $3$-manifold $M$ with
sectional curvatures between $0$ and $1$ (henceforth we suppose this curvature
condition on $M$, unless stated otherwise). 

Let $\Sigma$ be an embedded minimal $2$-sphere in $M$. Then the Gauss-Bonnet
theorem and the Gauss equation tells us that the area of $S$ is at least $4\pi$:
indeed we have 
\begin{equation}\label{eq:gaussbonnet}
4\pi=\int_\Sigma \bar K_\Sigma=\int \det(A)+K_{T\Sigma}\le \int_\Sigma
1=A(\Sigma)
\end{equation}
with $\det(A)$ the determinant of the shape operator which is non positive. We
prove in Theorem~\ref{th:main1}, that when the area of $\Sigma$ equals $4\pi$,
then $M$ is isometric to the unit $3$-sphere $\S^3_1$ or to a quotient of the product of the
unit $2$-sphere with $\R$, $\S^2_1\times\R$, with the product metric. 

We remark that Theorem~\ref{th:main1} does not hold for embedded minimal tori.
Given $\eps$ greater than zero, there are Berger spheres with curvatures between
$0$ and $1$, which contain embedded minimal tori of area less than $\eps$. But a
minimal sphere always has area at least $4\pi$. 

It would be interesting to know what
happens in higher dimensions. In the unit $n$-sphere $\S^n_1$, a compact minimal
hyper-surface $\Sigma$ always has volume at least the volume of the equatorial
$n-1$ sphere $\S^{n-1}_1$. Is there a rigidity theorem when one allows metrics
on $\S^n$ ($ =M$), of sectional curvatures between $0$ and $1$? Two questions
arise. First, does an embedded minimal hyper-sphere $\Sigma$ in $M$ have volume
at least the volume of $\S^{n-1}_1$. If this is so, and if $\Sigma$ is an
embedded minimal hyper-sphere with volume exactly the volume of $\S^{n-1}_1$, is
$M$ isometric to $\S^n_1$ or to $\S^{n-1}_1\times\R$?

In the same spirit as Theorem~\ref{th:main1}, we prove a rigidity theorem for
hyperbolic cusps. We recall that a $3$ dimensional hyperbolic cusp is a manifold
of the form $T\times\R$ with $T$ a $2$-torus and the hyperbolic metric
$e^{-2t}d\sigma_0^2+dt^2$ with $d\sigma_0^2$ a flat metric on $T$. In
Theorem~\ref{th:main2}, we prove that if $M$ is a complete Riemannian manifold
with sectional curvatures bounded above by $-1$ and $T$ is a constant mean
curvature $1$ torus embedded in $M$ then the mean convex side of $T$ in $M$ is
isometric to a hyperbolic cusp.

\section{Minimal spheres of area $4\pi$ and rigidity of $3$-manifolds}

In this section, we prove a rigidity result for a Riemannian $3$-manifold $M$
whose sectional curvatures are between $0$ and $1$. As explained in the
introduction, any minimal sphere in such a manifold has area at least
$4\pi$.

We denote by $\S^n_1$ the sphere of dimension $n$ with constant sectional
curvature $1$. We then have the following result.

\begin{thm}\label{th:main1}
Let $M$ be a complete Riemannian $3$-manifold whose sectional curvatures satisfy
$0\le K\le 1$. Assume that there exists an embedded minimal sphere $\Sigma$ in
$M$ with area $4\pi$. Then the manifold $M$ is isometric either to the sphere
$\S^3_1$ or to a quotient of $\S^2_1\times\R$.
\end{thm}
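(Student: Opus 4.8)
The plan is to first extract the pointwise consequences of equality in the displayed Gauss--Bonnet inequality \eqref{eq:gaussbonnet}. Since $\Sigma$ is minimal its principal curvatures are $\kappa,-\kappa$, so $\det(A)=-\kappa^2\le 0$, while $K_{T\Sigma}\le 1$ by the ambient bound; equality throughout forces $\det(A)\equiv 0$ and $K_{T\Sigma}\equiv 1$ on $\Sigma$. Hence $A\equiv 0$, i.e. $\Sigma$ is totally geodesic, and by the Gauss equation its intrinsic curvature is $\det(A)+K_{T\Sigma}\equiv 1$. Being a topological sphere, $\Sigma$ is simply connected, so $(\Sigma,g|_\Sigma)$ is isometric to $\S^2_1$.

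Next I would study the normal exponential map $\Phi(p,t)=\exp_p(tN(p))$ and the parallel surfaces $\Sigma_t=\Phi(\Sigma,t)$, with shape operator $S_t$ satisfying the Riccati equation $S_t'=-S_t^2-\mathcal{R}_t$, where $\mathcal{R}_t(X)=R(X,\partial_t)\partial_t$ and $S_0=A\equiv 0$. Two comparisons drive the argument. From $K\le 1$, Rauch comparison shows that no focal point occurs before distance $\pi/2$, so the $\Sigma_t$ are smooth topological spheres at least on $(-\pi/2,\pi/2)$. From $K\ge 0$ one has $\mathcal{R}_t\ge 0$, whence Riccati comparison with the initial value $S_0=0$ yields $S_t\le 0$ for $t\ge 0$ (and $S_t\ge 0$ for $t\le 0$); in particular $H_t=\tr S_t\le 0$ for $t\ge 0$, the area $|\Sigma_t|$ is non-increasing in $|t|$, and $|\Sigma_t|\le 4\pi$.

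The heart of the proof is to upgrade these inequalities to rigidity. Applying Gauss--Bonnet and the Gauss equation to each leaf gives $4\pi=\int_{\Sigma_t}(K_{T\Sigma_t}+\det S_t)\le \int_{\Sigma_t}(1+\tfrac14 H_t^2)=|\Sigma_t|+\tfrac14\int_{\Sigma_t}H_t^2$, so the defect $4\pi-|\Sigma_t|$ is controlled by $\int_{\Sigma_t} H_t^2$, with equality exactly when $K_{T\Sigma_t}\equiv1$ and $S_t$ is umbilic. I would show these equalities are forced for every $t$ by tracking the equality cases in the two comparison estimates along the flow and invoking a connectedness argument: the set of $t$ for which $\Sigma_t$ is totally umbilic with $K_{T\Sigma_t}\equiv1$ contains $t=0$ and is both open and closed, so the trace-free part of $S_t$ vanishes identically and the normal sectional curvatures $K(\partial_t,\cdot)$ attain extremal values uniformly on each leaf. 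This propagation of the equality is the main obstacle; it is precisely here that the interplay of the two curvature bounds must be exploited, since a naive pointwise argument fails and mixed behaviour (focusing in some directions but not others) must be excluded globally.

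Once every leaf is umbilic with $K_{T\Sigma_t}\equiv1$, a dichotomy appears according to whether the normal geodesics admit a focal point. If none occurs in either direction, then $S_t\equiv0$ and the normal curvatures vanish, so in Fermi coordinates the metric is the product $dt^2+g_{\S^2_1}$ and $\Phi\colon\Sigma\times\R\to M$ is a local isometry; completeness makes it a Riemannian covering, exhibiting $M$ as a quotient of $\S^2_1\times\R$. Otherwise a focal point occurs, necessarily at distance exactly $\pi/2$ by the equality case of the Rauch comparison under $K\le 1$, and the swept region carries constant curvature $1$; the leaves shrink to a single point at $t=\pm\pi/2$, the two sides are round geodesic balls of $\S^3_1$, and gluing them along $\Sigma$ identifies $M$ with $\S^3_1$. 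I would finally verify the completeness and compactness needed for these local isometries to close up, yielding the stated conclusion.
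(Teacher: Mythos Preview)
Your setup is sound: equality in \eqref{eq:gaussbonnet} does force $\Sigma$ to be totally geodesic with $K_{T\Sigma}\equiv 1$, the normal exponential map and Riccati equation are the right tools, and the leafwise Gauss--Bonnet inequality is the correct quantity to track. But the step you yourself flag as ``the main obstacle'' is a genuine gap, and the open--closed argument you sketch does not close it.

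The set of $t$ for which $\Sigma_t$ is umbilic with $K_{T\Sigma_t}\equiv 1$ is certainly closed, but there is no reason for it to be open. If $S_{t_0}=\mu I$, the Riccati equation gives $\mathring{S}{}'(t_0)=-\mathring{\mathcal{R}}_{t_0}$ for the trace-free part; this vanishes only when the normal curvature operator $\mathcal{R}_{t_0}$ is itself a multiple of the identity, i.e.\ when the sectional curvatures $K(\cdot,\partial_t)$ all coincide at that point. Umbilicity of $S_{t_0}$ together with $K_{T\Sigma_{t_0}}=1$ says nothing about these \emph{normal} curvatures, so umbilicity can be lost instantaneously and the connectedness argument stalls.

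The paper resolves this with a Gronwall inequality rather than a set-theoretic argument. Writing $F(t)=\int_{\Sigma_t}H^2+A(\Sigma_t)-4\pi$ (with $H$ the half-trace, so $H\pm\lambda$ are the principal curvatures), one has $F(0)=0$ and $F(t)\ge\int_{\Sigma_t}\lambda^2\ge 0$ from Gauss--Bonnet. The evolution equations give
\[
F'(t)=\int_{\Sigma_t}H\bigl(Ric(\partial_t)-2\bigr)+2\int_{\Sigma_t}H\,\lambda^2.
\]
Since $Ric\ge 0$ and $H(\cdot,0)=0$ imply $H\ge 0$, the bound $K\le 1$ makes the first integrand nonpositive, while $H\le C$ on any compact $t$-interval gives $F'\le 2CF$. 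Hence $F\equiv 0$, which forces simultaneously $\lambda\equiv 0$ and $H\bigl(Ric(\partial_t)-2\bigr)\equiv 0$. This last pointwise identity is exactly the control on the normal curvatures that your argument was missing.

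There is a second subtlety you pass over. Even after umbilicity is established, the principal curvature $H(p,t)$ may still depend on $p\in\Sigma$, so your focal/non-focal dichotomy could in principle occur differently along different normal geodesics. The paper rules this out with a further argument (Claim~\ref{cl:meancurv}): if $H(\cdot,t)>0$ somewhere on $\Sigma_t$, the identity $H(Ric(\partial_t)-2)=0$ forces $Ric(\partial_t)=2$ there, and continuity of $Ric(\partial_t)$ together with the evolution equation propagates positivity of $H$ across the whole leaf. Only after this does one obtain the clean global alternative $H\equiv 0$ (product) versus $H>0$ for $t>0$ with $Ric(\partial_t)\equiv 2$ (round sphere).
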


\begin{proof}
Let $\Phi$ be the map $\Sigma\times\R\rightarrow M, (p,t)\mapsto
\exp_p(tN(q))$ where $N$ is a unit normal vector field along $\Sigma$. In the
following, we focus on $\Sigma\times\R_+$; by symmetry of the configuration, the
study is similar for $\Sigma\times\R_-$. 

$\Sigma$ is compact, so there is an $\eps$ such that $\Phi$ is an immersion and
even an embedding on $\Sigma\times[0,\eps)$. Let us define  
$$\eps_0=\sup\{\eps>0| \, \Phi\text{ is an immersion on }
\Sigma\times[0,\eps)\};$$
$\eps_0$ can be equal to $+\infty$. Using $\Phi$, we pull back the Riemannian
metric of $M$ to $\Sigma\times[0,\eps_0)$. This metric can be
written $d s^2=d \sigma_t^2+d t^2$ where $d \sigma_t^2$ is a smooth family of
metrics on $\Sigma$. With this metric, $\Phi$ becomes a local isometry from
$\Sigma\times[0,\eps_0)$ to $M$ and $(\Sigma\times[0,\eps_0),ds^2)$ has
sectional curvatures between $0$ and $1$. Moreover, $\Sigma_0$ is minimal and
has area $4\pi$. Actually, we will prove the following facts.
\begin{claim*}
The metric $d \sigma_0^2$ has constant sectional curvature $1$ so $(\Sigma,d
\sigma_0^2)$ is isometric to $\S^2_1$. Moreover, we have two cases
\begin{enumerate}
\item $\eps_0=\pi/2$ and $d \sigma_t^2=\sin^2td \sigma_0^2$ or
\item $\eps_0=+\infty$ and $d \sigma_t^2=d \sigma_0^2$
\end{enumerate}
\end{claim*}

Let us denote by $\Sigma_t=\Sigma\times\{t\}$ the equidistant surfaces. We
denote by $H(p,t)$ the mean curvature of $\Sigma_t$ at the point $(p,t)$ with
respect to the unit normal vector $\partial_t$. We also define $\lambda(p,t)\ge
0$ such that $H+\lambda$ and $H-\lambda$ are the principal curvature of
$\Sigma_t$ at $(p,t)$. We notice that $\lambda=0$ if $\Sigma_t$ is umbilical at
$(p,t)$.  

The surfaces $\Sigma_t$ are spheres so, using the Gauss equation, the
Gauss-Bonnet formula implies:
$$
4\pi=\int_{\Sigma_t}\bar K_{\Sigma_t}=\int_{\Sigma_t} (H+\lambda)(H-\lambda)+
K_t= \int_{\Sigma_t}H^2-\lambda^2+K_t
$$
where $\bar K_{\Sigma_t}$ is the intrinsic curvature of $\Sigma_t$ and $K_t$ is
the sectional curvature of the ambient manifold of the tangent space to
$\Sigma_t$. Since $K_t\le 1$, we obtain the following inequality
\begin{equation}\label{eq:defF}
\int_{\Sigma_t}\lambda^2=\int_{\Sigma_t} H^2+K_t-4\pi\le
\int_{\Sigma_t} H^2+A(\Sigma_t)-4\pi
\end{equation}
where $A(\Sigma_t)$ is the area of $\Sigma_t$. In the following, we denote by
$F(t)$ the right hand side of this inequality.
\begin{claim}\label{cl:vanishing}
$F$ is vanishing on $[0,\eps_0)$.
\end{claim}
Since $\Sigma_0$ is minimal and has area $4\pi$, we have $F(0)=0$. We notice
that this implies that $\lambda(p,0)=0$ so $\Sigma_0$ is umbilical and
$K_{T\Sigma_0}=1$. Thus $(\Sigma_0,d\sigma_0)$ is isometric to $\S^2_1$. 

We have the usual formula:
\begin{equation}\label{eq:riccati}
\der{}{t}A(\Sigma_t)=-\int_{\Sigma_t}2H\quad\text{and}\quad \der{H}{t}=\frac12
(Ric(\partial_t)+|A_t|^2)
\end{equation}
where $A_t$ is the shape operator of $\Sigma_t$ and $Ric$ is the Ricci tensor of
$\Sigma\times[0,\eps_0)$. Since the sectional curvatures of $M\times[0,\eps_0)$
are non-negative, $Ric$ is non-negative. So the second formula above implies
that $H$ is increasing and thus $H\ge 0$ everywhere. Let us now compute and
estimate the derivative of $F$:
\begin{align*}
F'(t)&=\int_{\Sigma_t}(2H\der{H}{t}-2H^3)-\int_{\Sigma_t}2H\\
&=\int_{\Sigma_t}H(Ric(\partial_t)+|A_t|^2-2H^2-2)\\
&=\int_{\Sigma_t}H\big((Ric(\partial_t)-2)+
((H+\lambda)^2+(H-\lambda)^2-2H^2)\big)\\
&=\int_{\Sigma_t}H((Ric(\partial_t)-2)+2\lambda^2)\\
&\le2\int_{\Sigma_t}H\lambda^2
\end{align*}
where the last inequality comes from $Ric(\partial_t)-2\le 0$ because of the
hypothesis on the sectional curvatures. If we choose $\eps<\eps_0$, there is a
constant $C\ge 0$ such that $H\le C$ on $\Sigma\times[0,\eps]$. So for $t\in
[0,\eps]$, using the inequality \eqref{eq:defF}, we get $F'(t)\le 2C F(t)$. Then
$F(t)\le F(0)e^{2Ct}=0$ on $[0,\eps]$. So $F\le 0$ on $[0,\eps_0)$ and, because
of \eqref{eq:defF}, $F=0$ on $[0,\eps_0)$; this finishes the proof of
Claim~\ref{cl:vanishing}.

The first consequence of Claim~\ref{cl:vanishing} is that all the equidistant
surfaces $\Sigma_t$ are umbilical (see inequality \eqref{eq:defF}); so
$\lambda\equiv 0$. In the computation of the derivative of $F$, this implies
that 
$$
\int_{\Sigma_t}H(Ric(\partial_t)-2)=0
$$ 
Since $H(Ric(\partial_t)-2)\le 0$ everywhere, we obtain
\begin{equation}\label{eq:ric=2}
H(Ric(\partial_t)-2)=0 \text{ everywhere.}
\end{equation}
Moreover the umbilicity and \eqref{eq:riccati} implies that $\der{H}{t}=\frac12
Ric(\partial_t)+H^2$. We now prove the following claim
\begin{claim}\label{cl:meancurv}
Let $(p,t)\in\Sigma\times[0,\eps_0)$ ($t>0$) be such that $H(p,t)>0$ then
$H(q,t)>0$ for any $q\in\Sigma$
\end{claim}
In other words, when the mean curvature is positive at a point of an
equidistant, it is positive at any point of this equidistant. We recall that $H$
is increasing in the $t$ variable so when it becomes positive it stays positive.

So assume that $H(p,t)>0$ and consider $\Ome=\{q\in\Sigma|\,H(q,t)>0\}$ which is
a nonempty open subset of $\Sigma$. Let $q\in\Ome$. Since
$H(q,t)>0$, $Ric(\partial_t)(q,t)=2$ by \eqref{eq:ric=2}. Thus
$Ric(\partial_t)(r,t)=2$ for any $r\in\bar\Ome$. So if $r\in\bar\Ome$,
$Ric(\partial_t)(r,s)>0$ for $s< t$, close to $t$ and, by \eqref{eq:riccati},
this implies that $H(r,t)>0$ and $r\in\Ome$. So $\Ome$ is closed and
$\Ome=\Sigma$. This finishes the proof of Claim~\ref{cl:meancurv}.

Let us assume that there is an $\eps_1>0$ such that $H(p,t)=0$ for
$(p,t)\in\Sigma\times[0, \eps_1]$ and $H(p,t)>0$ for any
$(p,t)\in\Sigma\times(\eps_1,\eps_0)$. Because of the evolution equation of $H$,
this implies that $Ric(\partial_t)=0$ on $\Sigma\times[0,\eps_1]$. On
$\Sigma\times(\eps_1,\eps_0)$, we have $Ric(\partial_t)=2$ because of
\eqref{eq:ric=2}. So by continuity of $Ric(\partial_t)$, we get a contradiction
and then we have two possibilities
\begin{enumerate}
\item $H=0$ on $\Sigma\times[0,\eps_0)$ and $Ric(\partial_t)=0$ on
$\Sigma\times[0,\eps_0)$.
\item $H>0$ on $\Sigma\times(0,\eps_0)$ and $Ric(\partial_t)=2$ on
$\Sigma\times[0,\eps_0)$.
\end{enumerate}

In the first case, this implies that the sectional curvature of any $2$-plane
orthogonal to $\Sigma_t$ is zero. Thus $d\sigma_t^2=d\sigma_0^2$. Since the
map $\Phi$ ceases to be an immersion only if $d\sigma_t^2$ becomes singular this
implies that $\eps_0=+\infty$. Thus $\Sigma\times\R_+$ with the induced metric
is isometric to $\S^2_1\times\R_+$ and $\Phi$ is a local isometry from
$\S^2_1\times\R_+$ to $M$.

In the second case, the sectional curvature of any $2$-plane orthogonal to
$\Sigma_t$ is equal to $1$. Thus $d\sigma_t^2=\sin^2t d\sigma_0$ and
$\eps_0=\pi/2$. This also implies that $\Phi(p,\pi/2)$ is a point. So
$\Sigma\times[0,\pi/2]$ with the metric $ds^2$ is isometric to a hemisphere of
$\S^3_1$ and the map $\Phi$ is a local isometry from that hemisphere to $M$.

Doing the same study for $\Sigma\times\R_-$, we get in the first case a local
isometry $\Phi:\S^2_1\times \R\rightarrow M$ and in the second case a local
isometry $\Phi:\S^3_1\rightarrow M$. Since $\S^2_1\times\R$ and $\S^3_1$ are
simply connected, $\Phi$ is then the universal cover of $M$ and $M$ is then isometric to a quotient  of $\S^2_1\times\R$ or $\S^3_1$. Since $\Phi$ is
injective on $\Sigma$ this implies that in the second case, $\Phi$ is actually
injective and then a global isometry.
\end{proof}

\begin{remarq}
In the proof, since $\Phi$ is injective on $\Sigma$, the possible quotients of $\S^2_1\times\R$ are either $\S^2_1\times\R$ or its quotient by the subgroup generated by an isometry of the form $\S^2_1\times\R\rightarrow\S^2_1\times\R; (p,t)\mapsto(\alpha(p),t+t_0)$ with $\alpha$ an isometry of $\S^2_1$ and $t_0\neq 0$. 
\end{remarq}

\begin{remarq}
Something can be said about constant mean curvature $H_0$ spheres
in a Riemannian $3$-manifold with sectional curvatures between $0$ and $1$.
Indeed, the computation \eqref{eq:gaussbonnet} implies
that the area of $\Sigma$ is larger than $\frac{4\pi}{1+H_0^2}$, which is the
area of a geodesic sphere in $\S^3_1$ of mean curvature $H_0$. Moreover, if
$\Sigma$ has area $\frac{4\pi}{1+H^2}$, the above proof can be adapted to prove
that the mean convex side of $\Sigma$ is isometric to a spherical cap of
$\S^3_1$ with constant mean curvature $H_0$ (see Theorem~\ref{th:main2} below,
for a similar result in the hyperbolic case). 
\end{remarq}

\begin{remarq}
Let $M$ be a Riemannian $n$-manifold whose sectional curvatures are between $0$
and $1$ and let $\Sigma$ be a minimal $2$-sphere in $M$. A computation similar
to \eqref{eq:gaussbonnet} proves also that the area of $\Sigma$ is larger than
$4\pi$. It also implies that, if $\Sigma$ has area $4\pi$, $\Sigma$ is
totally geodesic and isometric to $\S^2_1$.
\end{remarq}

\section{Existence of hyperbolic cusps}

Let $(\T^2,g)$ be a flat $2$ torus, the manifold $\T^2\times\R_+$ with the
complete Riemannian metric $e^{-2t}g+dt^2$ is a hyperbolic $3$-dimensional
cusp. $\T^2\times\R$ is actually isometric to the quotient of a horoball of
$\H^3$ by a $\Z^2$ subgroup of isometries of $\H^2$ leaving the horoball
invariant. Any $\T^2\times\{t\}$ has constant mean curvature $1$. The
following theorem says that, in certain $3$-manifolds, a constant mean curvature
$1$ torus is necessarily the boundary of a hyperbolic cusp.

\begin{thm}\label{th:main2}
Let $M$ be a complete Riemannian $3$-manifold with its sectional curvatures
satisfying $K\le -1$. Assume that there exists a constant mean curvature $1$
torus $T$ embedded in $M$. Then $T$ separates $M$ and its mean convex side is
isometric to a hyperbolic cusp. 
\end{thm}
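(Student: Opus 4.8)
The plan is to mirror the proof of Theorem~\ref{th:main1}, replacing the Gauss--Bonnet constant $4\pi$ of the sphere by $0$ (the torus has vanishing Euler characteristic) and the bound $K\le 1$ by $K\le -1$. I orient the unit normal $N$ so that it points into the mean convex side and $T$ has mean curvature $+1$ with respect to $N$, and I consider $\Phi(p,t)=\exp_p(tN(p))$ on $T\times\R_+$, with $\eps_0$ the supremum of the $\eps$ for which $\Phi$ is an immersion on $T\times[0,\eps)$. Pulling back the metric gives $ds^2=d\sigma_t^2+dt^2$, and I let $H,\lambda$ denote, as before, the mean curvature and the deviation from umbilicity of the equidistant tori $T_t$, oriented by $\partial_t$. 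Because $\chi(T)=0$, Gauss--Bonnet and the Gauss equation give $0=\int_{T_t}(H^2-\lambda^2+K_t)$, and since $K_t\le -1$ this yields $\int_{T_t}\lambda^2=\int_{T_t}(H^2+K_t)\le\int_{T_t}(H^2-1)=:F(t)$. As $T_0$ has $H\equiv 1$ we get $F(0)=0$, and $F\ge\int_{T_t}\lambda^2\ge 0$ throughout; the goal is again $F\equiv 0$.

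The computation of $F'$ is formally identical to the one in Theorem~\ref{th:main1}: using \eqref{eq:riccati} and the evolution of the area element one finds
\begin{equation*}
F'(t)=\int_{T_t}H\big((Ric(\partial_t)+2)+2\lambda^2\big),
\end{equation*}
the only change being that the sign-definite term is now $Ric(\partial_t)+2\le 0$, which holds because every sectional curvature is $\le -1$. The essential difference with the spherical case is that here $Ric$ is negative rather than nonnegative, so I can no longer conclude that $H$ is monotone, and a priori $H$ could become negative, which would destroy the sign of the first term and the subsequent estimate. I expect this to be the main obstacle, and I would resolve it by a bootstrap. On the maximal interval $[0,t_1)$ on which $H>0$ (nonempty since $H(\cdot,0)=1$), $H$ is bounded on each $[0,\eps]$, so $F'\le 2\|H\|_\infty F$ and a Gr\"onwall estimate forces $F\equiv 0$; hence $\lambda\equiv 0$, and reading $F'=0$ together with $\lambda\equiv 0$, $H>0$ and $Ric(\partial_t)+2\le 0$, the integrand $H(Ric(\partial_t)+2)$ vanishes pointwise, so $Ric(\partial_t)\equiv -2$. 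The Riccati equation then collapses to $\partial_t H=H^2-1$ along each normal geodesic, and $H(\cdot,0)=1$ gives $H\equiv 1$ on $[0,t_1)$; by continuity $H$ remains positive past $t_1$ unless $t_1=\eps_0$, so in fact $t_1=\eps_0$ and $H\equiv 1$, $\lambda\equiv 0$, $Ric(\partial_t)\equiv -2$, $K_t\equiv -1$ on all of $T\times[0,\eps_0)$.

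From here the metric is pinned down exactly as in Theorem~\ref{th:main1}. Umbilicity with principal curvatures $1$ together with the area evolution $A'=-2A$ forces $d\sigma_t^2=e^{-2t}d\sigma_0^2$ (equivalently $\partial_t d\sigma_t^2=-2\,d\sigma_t^2$); the Gauss equation at $t=0$ gives $\bar K_{T_0}=1\cdot 1+K_0=0$, so $(T,d\sigma_0^2)$ is a flat torus; and since $e^{-2t}d\sigma_0^2$ never degenerates, $\Phi$ remains an immersion for all $t$, i.e. $\eps_0=+\infty$. Thus the pulled-back metric on $T\times\R_+$ is precisely the hyperbolic cusp metric $e^{-2t}d\sigma_0^2+dt^2$, and $\Phi$ is a local isometry from a complete cusp into $M$.

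The remaining, more technical point is to globalize: to show that $\Phi$ is injective with image the closed mean convex component bounded by $T$, so that this component is genuinely isometric to the cusp (and, in particular, that $T$ separates $M$). Unlike the spherical case, the model is not simply connected, so I cannot simply invoke that $\Phi$ is the universal cover. Instead I would use that the shape operator stays equal to the identity, so the normal geodesics have no focal points for any $t\ge 0$ (consistent with $\eps_0=+\infty$), together with completeness of the cusp metric and $K\le -1$ to preclude two distinct normal geodesics issuing from $T$ from meeting in $M$; this makes $\Phi$ an injective local isometry, hence a diffeomorphism onto its image, which is then the mean convex component. I would flag this globalization as the part requiring the most care.
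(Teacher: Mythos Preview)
Your analytic argument --- the definition of $F$, the Gr\"onwall estimate $F'\le 2CF$ on a maximal interval where $H>0$, and the subsequent deduction that $H\equiv 1$, $\lambda\equiv 0$, $Ric(\partial_t)\equiv -2$, $d\sigma_t^2=e^{-2t}d\sigma_0^2$, $\eps_0=+\infty$ --- is essentially identical to the paper's. (The paper writes $F(t)=\int_{T_t}H^2-A(T_t)$, which is exactly your $\int_{T_t}(H^2-1)$.)

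The gap is precisely where you flagged it: the globalization. Your sketch invokes ``no focal points'' and ``$K\le -1$'' to prevent distinct normal geodesics from meeting, but the absence of focal points is nothing more than the statement that $\Phi$ is an immersion, which you already have; it says nothing about injectivity. And $K\le -1$ does not by itself preclude intersections of normal geodesics once $M$ is not simply connected, nor can you run a covering-map argument, since the cusp $T\times\R_+$ has boundary and $T\times(0,\infty)$ with the cusp metric is incomplete at $t\to 0$.

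The paper closes this with a first-contact argument using the maximum principle for CMC surfaces. One lets $\eps_1$ be the smallest $\eps$ for which $\Phi$ fails to be injective on $T\times[0,\eps]$; then there exist $(p,a)$ and $(q,\eps_1)$ with the same image, where $a\in\{0,\eps_1\}$ (and $p\ne q$ if $a=\eps_1$). Near the common image one sees two embedded CMC-$1$ surfaces $\Phi(U)\subset\Phi(T_a)$ and $\Phi(V)\subset\Phi(T_{\eps_1})$, tangent there, with $\Phi(U)$ on the mean convex side of $\Phi(V)$ by the minimality of $\eps_1$. The maximum principle forces local (hence global) coincidence of the two equidistant tori; when $a=0$ this contradicts $A(T_0)\ne A(T_{\eps_1})$, and when $a=\eps_1$ it contradicts the minimality of $\eps_1$. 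This is the missing ingredient your proposal needs.
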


As a consequence, the existence of this torus implies that $M$ can not be
compact. The proof uses the same ideas as in Theorem~\ref{th:main1}

\begin{proof}
Let us consider the map $\Phi:T\times\R_+\rightarrow
M,(p,t)\mapsto\exp_p(tN(p))$ where $N$ is the unit normal vector field
normal to $T$ such that $N$ is the mean curvature vector of $T$. Let us define 
$$\eps_0=\sup\{\eps>0| \, \Phi\text{ is an immersion on } T\times[0,\eps)\}.$$
Using $\Phi$, we pull back the Riemannian metric of $M$ to $T\times[0,\eps_0)$;
it can be written $ds^2=dt^2+d\sigma_t^2$. We define $T_t=T\times\{t\}$ the
equidistant surfaces to $T_0$. We also denote by $H(p,t)$ the mean curvature of
the equidistant surfaces at $(p,t)$ with respect to $\partial_t$. We finally
define $\lambda(p,t)$ such that $H+\lambda$ and $H-\lambda$ are the principal
curvatures of $T_t$ at $(p,t)$.

The surfaces $T_t$ are tori so, by the Gauss equation and the Gauss-Bonnet
formula, we have
$$
0=\int_{T_t}\bar K_{T_t}=\int_{T_t}H^2-\lambda^2+K_t
$$ 
where $K_t$ is the sectional curvature of the ambient manifold of the
tangent space to $T_t$. Since $K_t\le -1$, we obtain the inequality
$$
\int_{T_t}\lambda^2=\int_{T_t}H^2+K_t\le\int_{T_t}H^2-A(T_t)
$$
Let $F(t)$ denote the right hand term of the above inequality. By hypothesis,
$H(p,0)=1$ so $F(0)=0$ and $F(t)\ge0$ for any $t\ge 0$. Let us compute the
derivative of $F$
\begin{align*}
F'(t)&=\int_{T_t}(2H\der{H}{t}-2H^3)+\int_{T_t}2H\\
&=\int_{T_t}H(Ric(\partial_t)+|A_t|^2-2H^2+2)\\
&=\int_{T_t}H((Ric(\partial_t)+2)+2\lambda^2)
\end{align*}

Since $H(p,0)=1$, we can consider $\eps\in(0,\eps_0)$ such that $0< H\le C$ on
$T\times[0,\eps]$. Since $Ric(\partial_t)+2\le 0$ we get:
$$
F'(t)\le\int_{T_t}2H\lambda^2\le 2CF(t)
$$
Thus $F(t)\le F(0)e^{2Ct}$ for $t\in[0,\eps]$; this implies $F(t)=0$ on that
segment. We then obtain $\lambda=0$ on $T\times[0,\eps]$ (the equidistant
surfaces are umbilical) and $Ric(\partial_t)=-2$ since $H>0$. Thus $H$ satisfies
the differential equation $\der{H}{t}=-2+2H^2$. This gives that $H=1$ on
$T\times[0,\eps]$ since $H=1$ on $T_0$. Thus we can let $\eps$ tend to $\eps_0$
to obtain that $F(t)=0$ on $[0,\eps_0)$ and $Ric(\partial_t)=-2$ and $H=1$ on
$T\times[0,\eps_0)$. Since $0=\int_{T_t}H^2+K_t$ and $K_t\le -1$, it follows
that $K_t=-1$ for all $t$ in the interval. We then have proved that
the sectional curvature of $T\times[0,\eps_0)$ with the metric $ds^2$ is equal
to $-1$ for any $2$-plane. Moreover, we get that $d\sigma_0^2$ is flat and that
$d\sigma_t^2=e^{-2t}d\sigma_0^2$. This implies that $\Phi$ is actually an
immersion on $T\times\R_+$ ($\eps_0=+\infty$) and $T\times\R_+$ is isometric to
a hyperbolic cusp. $\Phi$ is then a local isometry from this hyperbolic cusp to
$M$. 

To finish the proof, let us prove that $\Phi$ is in fact injective. If this is not
the case, let $\eps_1>0$ be the smallest $\eps$ such that $\Phi$ is not
injective on $T\times[0,\eps]$. This implies that there exist $p$ and $q$ in $T$
such that 
\begin{itemize}
\item either $\Phi(p,0)=\Phi(q,\eps_1)$
\item or $\Phi(p,\eps_1)=\Phi(q,\eps_1)$ (with $p\neq q$ in this case).
\end{itemize}
Let $U$ and $V$ be respective neighborhoods of $(p,0)$ (or $(p,\eps_1)$) in
$T_0$ (or $T_{\eps_1}$) and $(q,\eps_1)$ in $T_{\eps_1} $ such that $\Phi$ is
injective on them. Since $\eps_1$ is the smallest one, $\Phi(U)$ and $\Phi(V)$
are two constant mean curvature $1$ surfaces in $M$ that are tangent at
$\Phi(q,\eps_1)$. Moreover, in the first case, $\Phi(U)$ is included in the mean
convex side of $\Phi(V)$ so by the maximum principle $\Phi(U)=\Phi(V)$. Thus
$\Phi(T_0)$ would be equal to $\Phi(T_{\eps_1})$ which is impossible since these
two surfaces do not have the same area. In the second case, $\Phi(U)$ is
included in the mean convex side of $\Phi(V)$ and then $\Phi$ is not injective
on $T_s$ for $s$ near $t$ $s<t$, which is a contradiction.
\end{proof}

\noindent \textsc{Laurent Mazet}\\
Universit\'e Paris-Est \\
LAMA (UMR 8050), UPEC, UPMLV, CNRS\\
F-94010, Cr\'eteil, France

\noindent \verb?laurent.mazet@math.cnrs.fr?

\bigskip

\noindent \textsc{Harold Rosenberg}\\
IMPA\\
Estrada Dona Castorina 110\\
Rio de Janeiro / Brasil 22460-320

\noindent \verb?hrosen@free.fr?

\end{document}